\newtheorem{thm}{Theorem}[section]
\newtheorem{prop}[thm]{Proposition}
\newtheorem{lemma}[thm]{Lemma}
\newtheorem{cor}[thm]{Corollary}
\theoremstyle{definition}
\theoremstyle{definition} \newtheorem{rmk}[thm]{Remark}
\newcommand{\zz}{\mathbb{Z}}
\newcommand{\aff}{\mathbb{A}}
\newcommand{\proj}{\mathbb{P}}
\title{Polynomials whose $n$th powers have prescribed multiple-of-$n$th-degree coefficients}
\author{Jeffrey Yelton}
\begin{document}

\maketitle

\begin{abstract}

We show under a mild hypothesis that given field elements $a_0, \dots, a_m \in K$, there always exists a degree-$m$ polynomial whose $n$th power whose degree-$jn$ coefficient is equal to $a_j$ for $0 \leq j \leq m$.  We provide an alternate proof for the $n = 2$ case which is more constructive.

\end{abstract}

The purpose of this note is to prove the theorem below, which has applications to the author's work on constructing semistable models of superelliptic curves defined by an equation of the form $y^p = f(x)$ over mixed characteristic $(0, p)$.  Throughout, we follow the convention that any polynomial $f$ is considered to have degree-$t$ coefficient equal to $0$ for any integer $t > \deg(f)$.

\begin{thm} \label{thm main}

Let $m \geq 0$ and $n \geq 1$ be integers, and let $K$ be a field whose characteristic $p$ is either $0$ or satisfies $n = p^s n'$ with $p > n' \in \zz$.

Given any elements $a_0, \dots, a_m \in K$, there exists a finite algebraic extension $L / K$ and a polynomial $f(x) \in L[x]$ such that the degree-$jn$ coefficient of $f^n$ equals $a_j$ for $0 \leq j \leq m$.

\end{thm}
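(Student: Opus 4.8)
The plan is to treat the coefficients of $f$ as unknowns and recognize the prescribed conditions as a system of $m+1$ homogeneous polynomial equations in $m+1$ variables whose solvability I can establish by a purely algebraic (and, as it turns out, characteristic-free) argument. Write $f(x) = \sum_{i=0}^m c_i x^i$ with $c_0, \dots, c_m$ indeterminates, and for $0 \le j \le m$ let $\Phi_j \in K[c_0, \dots, c_m]$ be the degree-$jn$ coefficient of $f^n$; then the theorem asks for a point at which $\Phi_j = a_j$ for every $j$. The first observation is that each $\Phi_j$ is homogeneous of degree $n$ in the $c_i$, since every monomial of $f^n$ is a product of exactly $n$ of the coefficients. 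Thus $\Phi = (\Phi_0, \dots, \Phi_m)$ defines a morphism $\aff^{m+1} \to \aff^{m+1}$ whose components are homogeneous of the common degree $n$.

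The conceptual heart of the argument is that $\Phi_0, \dots, \Phi_m$ have no common zero over an algebraic closure $\overline{K}$ other than the origin. Indeed, if $f \ne 0$ has degree $m' \le m$ with leading coefficient $c_{m'} \ne 0$, then the top-degree term of $f^n$ is $c_{m'}^n x^{m'n}$ with no cancellation, so $\Phi_{m'} = c_{m'}^n \ne 0$ (a field has no nonzero nilpotents); since $m' \le m$, this $\Phi_{m'}$ is one of our functions. Hence $\{\Phi_0 = \dots = \Phi_m = 0\}$ forces every $c_i = 0$.

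From here I would invoke graded Noether normalization. Because the $m+1$ homogeneous elements $\Phi_j$ of positive degree cut out only the irrelevant point, the radical of $(\Phi_0, \dots, \Phi_m)$ is the irrelevant ideal, so the $\Phi_j$ form a homogeneous system of parameters and $S := K[c_0, \dots, c_m]$ is a finite module over its subring $R := K[\Phi_0, \dots, \Phi_m]$. A dimension count then forces the $\Phi_j$ to be algebraically independent, so $R$ is a polynomial ring and the inclusion $R \hookrightarrow S$ is a module-finite, hence integral, extension corresponding to a finite morphism $\aff^{m+1} \to \aff^{m+1}$. Applying lying-over to the maximal ideal $(\Phi_0 - a_0, \dots, \Phi_m - a_m)$ of $R$, the fiber $S / (\Phi_0 - a_0, \dots, \Phi_m - a_m)S$ is a nonzero finite-dimensional $K$-algebra; its residue field $L$ at any maximal ideal is a finite extension of $K$ carrying a point $(c_0, \dots, c_m) \in L^{m+1}$ with $\Phi_j = a_j$ for all $j$. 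Setting $f = \sum_i c_i x^i$ completes the proof.

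The main obstacle is the commutative-algebra step establishing module-finiteness and therefore surjectivity of $\Phi$; the geometric input (triviality of the zero fiber) is immediate from leading coefficients, and passage to a finite extension is automatic from lying-over. I note that this argument appears to be characteristic-free and not to use the hypothesis that $p = 0$ or $n = p^s n'$ with $p > n'$. That hypothesis is presumably what enables the promised explicit, constructive solution (for instance, solving a triangular system, which requires $n$ and smaller integers to be invertible), notably in the $n = 2$ case, rather than being needed for mere existence.
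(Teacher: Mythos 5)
Your proof is correct, and it takes a genuinely different --- and in fact more general --- route than the paper. The paper first reduces to the case $p = 0$ or $n < p$ by extracting $p^s$th roots of the $a_j$, then shows (\Cref{prop Jacobian}) that the Jacobian of $\varphi_{m,n}$ (your $\Phi$) at the special point $(1, 0, \dots, 0, 1)$ is a monomial matrix with nonzero entries $n {n-1 \choose k}$ when $\gcd(m,n) = 1$ --- this is precisely where the characteristic hypothesis is used --- deduces dominance from generic \'etaleness, upgrades dominance to surjectivity via closedness of images of projective morphisms (\Cref{lemma morphism conditions}), and finally removes the coprimality assumption by applying the coprime case to $m+1$ with prescribed top coefficient $0$. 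You compress all of this into one geometric observation --- the forms $\Phi_0, \dots, \Phi_m$ have no common zero besides the origin, by the leading-coefficient argument (note the relevant multinomial coefficient is $1$, so this is characteristic-free) --- followed by standard graded commutative algebra: homogeneous system of parameters, module-finiteness of $K[c_0,\dots,c_m]$ over $K[\Phi_0,\dots,\Phi_m]$ by graded Nakayama, algebraic independence by dimension, and lying over, which hands you the finite extension $L$ directly without passing to $\overline{K}$ or splitting into cases by $\gcd(m,n)$. What the paper's route buys is the generic \'etaleness of $\varphi_{m,n}$ (where the hypothesis holds), which feeds its closing remarks on the degree and ramification of the map; your route is blind to that, since finiteness and lying over ignore inseparability. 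What your route buys is substantial: it proves the theorem with \emph{no} hypothesis on the characteristic (e.g.\ it covers $p = 2$, $n = 3$, which the stated theorem excludes), and your diagnosis of the hypothesis is essentially right, though not because of the $n=2$ construction (there the hypothesis is automatic): it is an artifact of the Jacobian method, which genuinely cannot work when $p \mid n$ since then every partial derivative of $\varphi_{m,n}$ vanishes identically (already $\varphi_{0,p}$ is the Frobenius $c \mapsto c^p$, surjective but nowhere \'etale), forcing the paper's root-extraction reduction. Two steps of yours deserve one more line each, though neither is a gap: the identification of $\sqrt{(\Phi_0, \dots, \Phi_m)}$ with the irrelevant ideal should be proved over $\overline{K}$ (Nullstellensatz) and descended to $K$ by faithful flatness of $K[c_0,\dots,c_m] \to \overline{K}[c_0,\dots,c_m]$, and the module-finiteness criterion for an h.s.o.p.\ is the graded Nakayama argument, which is worth citing or writing out.
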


Let us fix integers $m \geq 0$ and $n \geq 1$.  It clearly suffices to assume that our field $K$ in which the elements $a_j$ live is algebraically closed and set out to prove that there is a polynomial $f(x) \in K[x]$ satisfying the desired property.  We now show that it also suffices to assume that we have $p = 0$ or $n = n' < p$ by the following argument.  Suppose that the field $K$ has characteristic $p > 0$ and that $n = p^s n'$ as in the statement of the theorem.  For $0 \leq j \leq m$, let $b_j \in K$ be an element satisfying $b_j^{p^s} = a_j$.  Because $K$ has characteristic $p$, we have 
\begin{equation}
(b_0 + b_1 x^{n'} + \dots + b_m x^{mn'})^{p^s} = (b_0)^{p^s} + (b_1 x^{n'})^{p^s} + \dots + (b_m x^{mn'})^{p^s} = a_0 + a_1 x^n + \dots + a_m x^{mn}.
\end{equation}
If we assume for the moment \Cref{thm main} holds over $K$ for any exponent $n < p$, replacing our fixed $n = p^s n'$ by $n'$ and the elements $a_0, \dots, a_m \in K$ by $b_0, \dots, b_m \in K$ in the statement of the theorem, we get a polynomial $f(x) \in K[x]$ such that the degree-$jn'$ coefficient of $f^{n'}$ equals $b_j$ for $0 \leq j \leq m$.  Since taking the $p^s$th power of a polynomial over $K$ amounts to adding $p^s$th powers of each of the terms of the polynomial, for each $j$, the degree-$(jn'p^s = jn)$ coefficient of $(f^{n'})^{p^m} = f^n$ equals $b_j^{p^s} = a_j$, so $f$ is the polynomial satisfying the desired conclusion for $n$ and $a_0, \dots, a_m \in K$.

In light of the above argument, we assume from now on that we have $p = 0$ or $n < p$.

The author is grateful to Robert Lemke Oliver for suggesting to him the following method of proof.  For any integers $m \geq 0$ and $n \geq 1$, we define the map $\varphi_{m, n} : \aff_K^{m+1} \to \aff_K^{m+1}$ as the one taking an $(m+1)$-tuple $(\alpha_0, \dots, \alpha_m) \in K^{m+1}$ to the $(m+1)$-tuple $(a_0, \dots, a_m) \in K^m$, where each $a_j$ is the degree-$jn$ coefficient of the polynomial $(\alpha_0 + \alpha_1 x + \dots + \alpha_m x^m)^n \in K[x]$.  Proving \Cref{thm main} clearly amounts to showing that $\varphi_{m, n}$ is surjective.

We observe that the map $\varphi_{m, n}$ is given by an $(m+1)$-tuple of polynomials in the variables $\alpha_0, \dots, \alpha_m$ and that these polynomials are each homogeneous of degree $n$; equivalently, it is immediate from the definition of $\varphi_{m, n}$ that given any scalar $\gamma \in K$, we have 
\begin{equation} \label{eq homogeneity}
\varphi_{m, n}(\gamma (\alpha_0, \dots, \alpha_m)) = \gamma^n \varphi_{m, n}(\alpha_0, \dots, \alpha_m),
\end{equation}
 where the notation on each side indicates scaling of an $(m+1)$-tuple in $K^{m+1}$ treated as a vector.  It follows that $\varphi_{m, n} : \aff_K^{m+1} \to \aff_K^{m+1}$ is a morphism and that it induces a morphism $\proj_K^m \to \proj_K^m$, which by slight abuse of notation we also denote by $\varphi_{m, n}$.

\begin{lemma} \label{lemma morphism conditions}

With the above set-up, the following are equivalent.

\begin{enumerate}[(i)]

\item The morphism $\varphi_{m, n} : \aff_K^{m+1} \to \aff_K^{m+1}$ is surjective.

\item The morphism $\varphi_{m, n} : \proj_K^{m+1} \to \proj_K^{m+1}$ is surjective.

\item The morphism $\varphi_{m, n} : \proj_K^{m+1} \to \proj_K^{m+1}$ is dominant.

\end{enumerate}

\end{lemma}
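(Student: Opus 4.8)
The plan is to establish the two equivalences (ii)$\Leftrightarrow$(iii) and (i)$\Leftrightarrow$(ii), which together give that all three conditions are equivalent. Throughout I will use that $K$ is algebraically closed (as already reduced to) and that $\varphi_{m,n}$ is homogeneous of degree $n$ by \eqref{eq homogeneity}. I will also record the observation that makes the induced map on projective space a genuine morphism: if $h = \sum_{i=0}^m \alpha_i x^i$ is nonzero with top nonzero coefficient $\alpha_d$ (so $0 \le d \le m$), then $h^n$ has degree $dn$ with leading coefficient $\alpha_d^n \neq 0$, and since $dn$ is one of the tracked degrees $0, n, \dots, mn$, the output $\varphi_{m,n}(\alpha)$ is nonzero. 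Hence the affine map satisfies $\varphi_{m,n}^{-1}(0) = \{0\}$, so nonzero points map to nonzero points.

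For (ii)$\Leftrightarrow$(iii): one direction is immediate, since any surjective morphism has dense image and is therefore dominant. For the converse I would invoke properness. Because $\proj_K^m$ is complete and the target is separated, the morphism $\varphi_{m,n} : \proj_K^m \to \proj_K^m$ is proper, hence closed, so its image is a closed subset of $\proj_K^m$. A dominant morphism has dense image, and a dense closed subset of the irreducible variety $\proj_K^m$ is the whole space; thus dominance forces surjectivity.

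For (i)$\Rightarrow$(ii): given a point of $\proj_K^m$, I lift it to a nonzero $b \in \aff_K^{m+1}$; surjectivity of the affine map provides $\alpha$ with $\varphi_{m,n}(\alpha) = b \neq 0$, whence $\alpha \neq 0$ (as $\varphi_{m,n}(0) = 0$), so the class $[\alpha]$ maps to the class $[b]$. For (ii)$\Rightarrow$(i): given a target $(a_0, \dots, a_m) \in \aff_K^{m+1}$, the case where it is $0$ is handled by $\varphi_{m,n}(0) = 0$, so assume it is nonzero with class $[a] \in \proj_K^m$. Projective surjectivity yields a nonzero $\alpha$ with $\varphi_{m,n}(\alpha) = \lambda a$ for some $\lambda \in K^\times$; this is the one place I use homogeneity together with algebraic closedness. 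Choosing $\gamma \in K$ with $\gamma^n = \lambda^{-1}$ and applying \eqref{eq homogeneity} gives $\varphi_{m,n}(\gamma \alpha) = \gamma^n \lambda a = a$, so $a$ lies in the affine image.

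The only genuinely nontrivial input is the closed-image fact from properness used in (iii)$\Rightarrow$(ii); the remaining steps are formal consequences of the homogeneity relation \eqref{eq homogeneity}. I expect the subtlest point to be the rescaling in (ii)$\Rightarrow$(i), which must convert a merely projective preimage into an exact affine one, and which is precisely where the extraction of an $n$th root — and hence the reduction to an algebraically closed $K$ — is essential.
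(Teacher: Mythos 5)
Your proof is correct and takes essentially the same route as the paper's: (i)$\Leftrightarrow$(ii) via the homogeneity relation \eqref{eq homogeneity} and extraction of an $n$th root (using that $K$ is algebraically closed), and (iii)$\Rightarrow$(ii) via the fact that morphisms out of projective space have closed image, which the paper cites from Shafarevich and you derive from properness. Your one addition --- checking that a nonzero tuple $\boldsymbol{\alpha}$ has $\varphi_{m,n}(\boldsymbol{\alpha}) \neq \boldsymbol{0}$ via the leading coefficient of $f^n$, so that the induced map on $\proj_K^m$ is genuinely a morphism --- is a worthwhile detail that the paper asserts in its setup without proof.
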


\begin{proof}
Let $\boldsymbol{0}$ denote the origin in $\aff_K^{m+1}$, and note that it is fixed by $\varphi_{m, n}$.  Since the obvious map $\aff_K^{m+1} \smallsetminus \{\boldsymbol{0}\} \to \proj_K^m$ is surjective, it is immediate that (i) implies (ii).  Now assume that (ii) holds and choose a $K$-point of $\aff_K^{m+1}$ viewed as an $(m+1)$-tuple $\boldsymbol{\beta} \in K^{m+1}$.  The $K$-point $P \in \proj_K^m$ that it reduces to is the image under $\varphi_{m, n}$ of some $K$-point of $\proj_K^m$, which means that it is represented by an $(m+1)$-tuple $\boldsymbol{\beta}' \in K^{m+1} \smallsetminus \{\mathbf{0}\}$ which is the image under $\varphi_{m, n}$ of an $(m+1)$-tuple $\boldsymbol{\alpha} \in K^{m+1} \smallsetminus \{\mathbf{0}\}$.  Now since $\boldsymbol{\beta}, \boldsymbol{\beta}' \in K^{m+1}$ represent the same point of $\proj_K^m$, we must have $\boldsymbol{\beta}' = \gamma \boldsymbol{\beta}$ for some $\gamma \in K^\times$.  Letting $\delta \in K^\times$ be an $n$th root of $\gamma$, using (\ref{eq homogeneity}), we then have 
\begin{equation}
\varphi_{m, n}(\delta^{-1}\boldsymbol{\alpha}) = \gamma^{-1}\boldsymbol{\beta}' = \boldsymbol{\beta}.
\end{equation}
This shows that $\boldsymbol{\beta}$, viewed as a $K$-point of $\aff_K^{m+1}$, lies in the image of $\varphi_{m, n}$, and so $\varphi_{m, n} : \aff_K^{m+1} \to \aff_K^{m+1}$ is surjective.  Thus, properties (i) and (ii) are equivalent.

The fact that (ii) implies (iii) is immediate.  The converse is a direct corollary of the well-known fact that morphisms from projective spaces are closed and thus their images are closed: see for instance \cite[\S I.5, Theorem 2]{shafarevich1994basic}.
\end{proof}

Now the key to proving that the equivalent conditions treated by \Cref{lemma morphism conditions} all hold is to show that the \emph{Jacobian matrix} of the morphism $\varphi_{m, n} : \aff_K^{m+1} \to \aff_K^{m+1}$ is invertible at some point in the domain.  This is done through the following proposition.

\begin{prop} \label{prop Jacobian}

In the above situation, assume that we have $\gcd(m, n) = 1$.  Writing $\alpha_0, \dots, \alpha_m$ for the coordinate variables of $\aff_K^{m+1}$ and writing $a_i$ for the $i$th coordinate of $\varphi_{m, n}(\alpha_0, \dots, \alpha_m)$, the Jacobian matrix $\big(\frac{\partial a_{i-1}}{\partial \alpha_{j-1}}\big)_{1 \leq i, j \leq m + 1}$ is invertible at the point $(\alpha_0, \dots, \alpha_m) = (1, 0, \dots, 0, 1)$ in the domain.

\end{prop}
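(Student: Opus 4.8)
The plan is to compute the Jacobian entries explicitly at the point $(1, 0, \dots, 0, 1)$, where the polynomial $f = \alpha_0 + \dots + \alpha_m x^m$ specializes to $1 + x^m$, and then to exhibit the resulting matrix as a monomial matrix, that is, one with exactly one nonzero entry in each row and column, whose nonzero entries are units of $K$; such a matrix is automatically invertible. Throughout I write the row and column indices as $0, \dots, m$, which differs from the statement only by a cosmetic shift. First I would differentiate: since $a_i$ is the coefficient of $x^{in}$ in $f^n$ and $\frac{\partial f}{\partial \alpha_j} = x^j$, the chain rule gives that $\frac{\partial a_i}{\partial \alpha_j}$ equals $n$ times the coefficient of $x^{in - j}$ in $f^{n-1}$. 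Evaluating at our point, $f^{n-1} = (1 + x^m)^{n-1} = \sum_{\ell = 0}^{n-1} \binom{n-1}{\ell} x^{m\ell}$, so the $(i,j)$-entry equals $n \binom{n-1}{\ell}$ when $in - j = m\ell$ for some integer $0 \leq \ell \leq n-1$, and vanishes otherwise.

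Next I would analyze, for each fixed row index $i$, how many column indices $j \in \{0, \dots, m\}$ yield a nonzero entry. The condition $0 \leq j = in - m\ell \leq m$ is equivalent to $\frac{in}{m} - 1 \leq \ell \leq \frac{in}{m}$, an interval of length $1$. For the boundary rows $i = 0$ and $i = m$, the endpoints of this interval are integers, but in each case only one of the two candidate values of $\ell$ lies in the admissible range $\{0, \dots, n-1\}$, namely $\ell = 0$ and $\ell = n-1$, placing a single nonzero entry $n\binom{n-1}{0} = n$ and $n\binom{n-1}{n-1} = n$ in columns $0$ and $m$ respectively. For an interior row $0 < i < m$, the hypothesis $\gcd(m, n) = 1$ guarantees $m \nmid in$, so $\frac{in}{m}$ is not an integer and the length-$1$ interval contains exactly one integer $\ell_i = \lfloor in/m \rfloor$, which lies in $\{0, \dots, n-1\}$; hence each interior row also has exactly one nonzero entry, located in column $j_i = in - m\ell_i = in \bmod m \in \{1, \dots, m-1\}$.

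Then I would verify that these nonzero entries occupy distinct columns, so that the Jacobian is a genuine monomial matrix. Since $\gcd(m, n) = 1$, multiplication by $n$ is a bijection of $\zz / m\zz$ fixing $0$, so the map $i \mapsto in \bmod m$ permutes the nonzero residues $\{1, \dots, m-1\}$; together with the boundary assignments $j_0 = 0$ and $j_m = m$, the map $i \mapsto j_i$ is a bijection of $\{0, \dots, m\}$. Finally, the nonzero entries $n\binom{n-1}{\ell}$ are all units in $K$: we have $n \neq 0$, and since $p = 0$ or $n < p$ forces $n - 1 < p$, every binomial coefficient $\binom{n-1}{\ell}$ with $0 \leq \ell \leq n-1$ is nonzero in $K$. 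A square matrix with exactly one nonzero entry per row, all in distinct columns, has determinant equal to $\pm$ the product of those entries and is therefore invertible.

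The main obstacle I anticipate is the row-counting step: one must argue carefully that the column constraint $0 \leq j \leq m$ combined with the expansion constraint $0 \leq \ell \leq n-1$ leaves exactly one admissible $\ell$ per row, and it is precisely here that the coprimality hypothesis is essential — without $\gcd(m, n) = 1$ some interior row would have $\frac{in}{m} \in \zz$, producing a second nonzero entry and a column collision that destroys the permutation structure. The boundary rows $i = 0, m$ require separate attention, since there the interval endpoints are integers, and one must check that exactly one of the two candidate exponents is excluded by the range $\{0, \dots, n-1\}$.
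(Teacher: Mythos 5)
Your proof is correct, and it reaches the same structural conclusion as the paper's proof --- that at $(1, 0, \dots, 0, 1)$ the Jacobian is a monomial matrix, with exactly one nonzero unit entry in each row and each column --- starting from the same entry computation $\frac{\partial a_i}{\partial \alpha_j} = n\,\bigl[x^{in-j}\bigr](1+x^m)^{n-1}$. However, your combinatorial bookkeeping is organized dually to the paper's, and this is a genuine difference in mechanism. The paper fixes a \emph{column} $j$ and uses the invertibility of $\overline{m}$ in $\zz/n\zz$ to produce a unique exponent $k \in \{0, \dots, n-1\}$ with $mk + j - 1 \equiv 0 \pmod{n}$, hence a unique nonzero entry in each column; it then rules out two nonzero entries sharing a row by a separate contradiction argument (forcing $j = 1$, $j' = m+1$, $k = 0$, $k' = -1$). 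You instead fix a \emph{row} $i$, observe that an admissible exponent $\ell$ must be an integer in the length-one interval $\bigl[\frac{in}{m} - 1, \frac{in}{m}\bigr]$, and use coprimality in the form $m \nmid in$ (for $0 < i < m$) to conclude that exactly one such integer exists; column-distinctness then follows because $i \mapsto in \bmod m$ is injective, i.e., you exploit the invertibility of $\overline{n}$ in $\zz/m\zz$ where the paper exploits that of $\overline{m}$ in $\zz/n\zz$. Your route buys an explicit description of the underlying permutation --- multiplication by $n$ modulo $m$, fixing the two endpoint rows --- which the paper leaves implicit; the paper's route stays entirely inside modular arithmetic and avoids both the floor-function/interval analysis and the separate treatment of the boundary rows $i = 0, m$ that your argument requires. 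Both proofs invoke the characteristic hypothesis in exactly the same way, to guarantee that the entries $n\binom{n-1}{\ell}$ are nonzero in $K$.
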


\begin{proof}
The $(i, j)$th entry of the Jacobian is given by the degree-$(i-1)n$ term of the polynomial 
\begin{equation}
\begin{aligned}
\tfrac{\partial}{\partial \alpha_{j-1}}(\alpha_m x^m + \dots + \alpha_1 x + \alpha_0)^n &= n(\alpha_m x^m + \dots + \alpha_1 x + \alpha_0)^{n-1} \tfrac{\partial}{\partial \alpha_{j-1}} (\alpha_{j-1} x^{j-1}) \\
&= n(\alpha_m x^m + \dots + \alpha_1 x + \alpha_0)^{n-1} x^{j-1}.
\end{aligned}
\end{equation}
Evaluated at $(\alpha_0, \dots, \alpha_m) = (1, 0, \dots, 0, 1)$, we get 
\begin{equation}
\begin{aligned}
\tfrac{\partial}{\partial \alpha_{j-1}}(\alpha_m x^m + \dots + \alpha_1 x + \alpha_0)^n \big|_{(\alpha_0, \dots, \alpha_m) = (1, 0, \dots, 0, 1)} &= n(x^m + 1)^{n-1} x^{j-1} \\
&= \sum_{k = 0}^{n-1} n {n - 1 \choose k} x^{mk+j-1}.
\end{aligned}
\end{equation}
Denoting the reduction of any integer $a \in \zz$ modulo $n$ by $\bar{a} \in \zz / n\zz$, we note that the element $\overline{m} \in \zz / n\zz$ has a multiplicative inverse $\overline{m}^{-1} \in \zz / n\zz$ since the integers $m$ and $n$ are relatively prime.  Fixing an index $j$, we thus have a unique element $\bar{k} \in \zz / n\zz$ such that we have $\overline{m}\bar{k} + \bar{j} - \bar{1} = \bar{0} \in \zz / n\zz$, given by $\bar{k} = \overline{m}^{-1}(\bar{1} - \bar{j})$.  There is then a unique $k \in \{0, \dots, n - 1\}$ reducing modulo $n$ to $\bar{k}$, which is the only $k$ in this interval satisfying $mk + j - 1 \in n\zz$.  The inequalities $0 \leq k \leq n - 1$ and $1 \leq j \leq m + 1$ imply that $0 \leq mk + j - 1 \leq mn$, and so we have $0 \leq \frac{1}{n}(mk + j - 1) \leq m$; this gives us $mk + j - 1 = (i - 1)n$ for some unique $i \in \{1, \dots, m + 1\}$.  As we have $p = 0$ or $n < p$, the coefficient $n {n - 1 \choose k}$ is nonzero; it follows that there is a unique nonzero entry in the $j$th column of the Jacobian matrix.

Now suppose that there are two indices $j, j'$ such that the $(i, j)$th and $(i, j')$th entries of the Jacobian matrix are both nonzero.  Then we have integers $k, k' \in \{0, \dots, n - 1\}$ satisfying 
\begin{equation} \label{eq j j'}
mk + j - 1 = (i - 1)n = mk' + j' - 1.
\end{equation}
This implies that $m(k - k') = j' - j$.  Suppose that we have $j' \neq j$.  Then the only way for $m$ to divide the difference $j' - j$ is if $j' = m + 1$ and $j = 1$, which gives us $m(k - k') = (m + 1) - 1 = m$ so that $k - k' = 1$.  Now from the first equation in (\ref{eq j j'}), we get $mk = (i - 1)n$.  Since $m$ and $n$ are relatively prime, this means that $n \mid k$, which forces $k = 0$.  This is a contradiction, since $k' = k - 1$ cannot be negative.  Therefore, we have $j' = j$, which is to say that for each index $i$, the $i$th row of the Jacobian matrix has a unique nonzero entry.

Now since each row and each column of the Jacobian matrix at the point $(\alpha_0, \dots, \alpha_m) = (1, 0, \dots, 0, 1)$ has a unique nonzero entry, that matrix is invertible (being a permutation matrix times a diagonal matrix with nonzero diagonal entries).
\end{proof}

\begin{cor} \label{cor surjectivity if relatively prime}

In the above situation, assume that we have $\gcd(m, n) = 1$.  Then the map $\varphi_{m, n} : \aff_K^{m+1} \to \aff_K^{m+1}$ is surjective.

\end{cor}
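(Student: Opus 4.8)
The plan is to deduce the statement directly by combining \Cref{prop Jacobian} with \Cref{lemma morphism conditions}: the proposition supplies a point of the domain at which the Jacobian of $\varphi_{m,n}$ is invertible, and this is exactly the input needed to prove that the morphism is dominant, after which the lemma upgrades dominance to surjectivity. Since the hypothesis $\gcd(m,n) = 1$ of the corollary is precisely that of the proposition, I may invoke it without further ado.

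First I would record the general principle that an invertible Jacobian at a single point forces dominance. Set $P = (1, 0, \dots, 0, 1)$. \Cref{prop Jacobian} says the differential $d(\varphi_{m,n})_P \colon T_P \aff_K^{m+1} \to T_{\varphi_{m,n}(P)} \aff_K^{m+1}$ is an isomorphism, so in particular $\mathrm{rank}\, d(\varphi_{m,n})_P = m+1$. For any morphism $f$ of varieties over the algebraically closed field $K$ with smooth source one has $\dim \overline{\im f} \geq \mathrm{rank}\, df_x$ at every point $x$: writing $Y' = \overline{\im f}$ and applying the theorem on fibre dimensions to the dominant morphism $f \colon X \to Y'$, every component of the fibre through $x$ has dimension at least $\dim X - \dim Y'$, so the containment $\ker df_x \supseteq T_x f^{-1}(f(x))$ yields $\mathrm{rank}\, df_x \leq \dim Y'$. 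Applying this with $f = \varphi_{m,n}$ and $x = P$ gives $\dim \overline{\im \varphi_{m,n}} \geq m+1$, whence $\overline{\im \varphi_{m,n}} = \aff_K^{m+1}$ and $\varphi_{m,n} \colon \aff_K^{m+1} \to \aff_K^{m+1}$ is dominant. (Equivalently, an invertible Jacobian makes $\varphi_{m,n}$ étale at $P$, hence flat and open there, so its image contains an open set.)

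To feed this into \Cref{lemma morphism conditions}(iii), I would pass to the projective morphism, which is defined near the image of $P$ because $\varphi_{m,n}(P) \neq \boldsymbol{0}$ (the degree-$0$ coefficient of $(1+x^m)^n$ is $1$). Since $\varphi_{m,n}$ is homogeneous of degree $n$ (see \eqref{eq homogeneity}), Euler's identity gives $d(\varphi_{m,n})_P(P) = n\, \varphi_{m,n}(P)$; as $n \neq 0$ in $K$ (because $p = 0$ or $n < p$), the isomorphism $d(\varphi_{m,n})_P$ carries the radial line $K\cdot P$ isomorphically onto $K \cdot \varphi_{m,n}(P)$ and therefore descends to an isomorphism on the quotient tangent spaces, which is precisely the differential of the projective morphism $\varphi_{m,n} \colon \proj_K^m \to \proj_K^m$ at the image of $P$. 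The same rank--dimension inequality then shows this projective morphism is dominant, so condition (iii) of \Cref{lemma morphism conditions} holds, and the lemma gives condition (i), namely the surjectivity of $\varphi_{m,n} \colon \aff_K^{m+1} \to \aff_K^{m+1}$.

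The main obstacle is the middle step — turning ``invertible Jacobian at one point'' into genuine dominance — together with the affine-to-projective bookkeeping; one must also ensure the argument survives in positive characteristic, but this is guaranteed by the hypothesis $n < p$ (the same hypothesis that keeps the Jacobian nonzero in \Cref{prop Jacobian} keeps $n$ invertible in the Euler relation), and the inequality $\dim\overline{\im f} \geq \mathrm{rank}\, df_x$ used above requires no separability assumption.
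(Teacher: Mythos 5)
Your proof is correct, and its overall skeleton matches the paper's: use \Cref{prop Jacobian} to establish dominance, then invoke \Cref{lemma morphism conditions} to upgrade dominance to surjectivity. The two middle steps, however, are carried out with genuinely different tools. For ``invertible Jacobian at a point implies dominant,'' the paper argues via \'{e}tale machinery: invertibility of the Jacobian makes $\varphi_{m,n}$ \'{e}tale at $(1,0,\dots,0,1)$, hence \'{e}tale on a Zariski-open $U$, whose image then has dimension $m+1$ and is therefore dense. You instead prove the self-contained inequality $\dim \overline{\im f} \geq \mathrm{rank}\, df_x$ (for smooth source) from the fibre-dimension theorem; this avoids citing any \'{e}tale theory and is arguably more elementary, while your parenthetical \'{e}tale/flat/open alternative is essentially the paper's route. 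For the passage from affine dominance to projective dominance, the paper uses pure point-set topology: the quotient map $\aff_K^{m+1}\smallsetminus\{\boldsymbol{0}\} \to \proj_K^m$ is continuous and surjective, so it carries the dense image of the affine map to a dense subset of $\proj_K^m$. You instead differentiate the projective morphism itself, using Euler's identity $d(\varphi_{m,n})_P(P) = n\,\varphi_{m,n}(P)$ together with \eqref{eq homogeneity}, the nonvanishing $\varphi_{m,n}(P) \neq \boldsymbol{0}$, and $n \neq 0$ in $K$ to show that the differential descends to an isomorphism on quotient tangent spaces, and then reapply your rank inequality on $\proj_K^m$. This is more work than the paper's density argument, but it yields slightly more information (the projective morphism is unramified at $[P]$, not merely dominant) and it correctly identifies the exact point where the characteristic hypothesis $p = 0$ or $n < p$ enters the projective step. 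Both arguments are complete; yours trades the paper's topological shortcut for a differential-geometric one and its citation of \'{e}tale results for a fibre-dimension computation.
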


\begin{proof}
\Cref{prop Jacobian} implies that the morphism $\varphi_{m, n}$ is \'{e}tale at the point $(\alpha_0, \dots, \alpha_m) = (1, 0, \dots, 0, 1)$ (as in \cite[Example 5.45]{milne2024algebraic}); it is therefore \'{e}tale on a Zariski open subvariety $U \subset \aff_K^{m+1}$ (see for instance \cite[Lemma 5.54]{milne2024algebraic}).  Since $\varphi_{m, n}$ is an \'{e}tale morphism of $U$ onto its image $\varphi_{m, n}(U)$, and since $U$ has dimension $m + 1$, its image $\varphi_{m, n}(U)$ must have dimension $m + 1$ as well.  Since the Zariski closure of any dimension-$(m+1)$ subspace of $\aff_K^{m+1}$ is the whole space $\aff_K^{m+1}$, the image of $\varphi_{m, n}$ is dense in $\aff_K^{m+1}$; in other words, the morphism $\varphi_{m, n} : \aff_K^{m+1} \to \aff_K^{m+1}$ is dominant.

Now the obvious (surjective) map $\aff_K^{m+1} \smallsetminus \{\boldsymbol{0}\} \to \proj_K^m$ (as in the proof of \Cref{lemma morphism conditions}) is easily verified to be continuous under the Zariski topology; it is an elementary exercise in point-set topology to show that the image of a dense subspace under a continuous map is dense in the image of that map, so we have that the image of $\varphi_{m, n}(\aff_K^{m+1}) \subset \aff_K^{m+1}$ in $\proj_K^m$ is a dense subspace of $\proj_K^m$ as well.  Thus, the map $\varphi_{m, n} : \proj_K^m \to \proj_K^m$ is also dominant.  Then by \Cref{lemma morphism conditions}, the map $\varphi_{m, n} : \aff_K^{m+1} \to \aff_K^{m+1}$ is surjective.
\end{proof}

The following corollary finishes the proof of \Cref{thm main}.

\begin{cor}

In the above situation, for any integers $m \geq 0$ and $n \geq 1$ (not necessarily relatively prime), the map $\varphi_{m, n} : \aff_K^{m+1} \to \aff_K^{m+1}$ is surjective.

\end{cor}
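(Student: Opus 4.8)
The plan is to reduce everything to a single geometric fact that requires no coprimality assumption, namely that $\varphi_{m,n}$ collapses no nonzero point to the origin. Granting this, \Cref{lemma morphism conditions} shows that it is enough to prove that the projective morphism $\varphi_{m,n}\colon \proj_K^m \to \proj_K^m$ is dominant, and I will obtain this from the dominance of the affine map $\varphi_{m,n}\colon \aff_K^{m+1}\to \aff_K^{m+1}$ exactly as in the proof of \Cref{cor surjectivity if relatively prime}. The affine dominance, in turn, will come from a dimension count once we know that the fiber of $\varphi_{m,n}$ over the origin is a single point. Since this argument is insensitive to $\gcd(m,n)$, it handles the non-coprime case as well (for the coprime case \Cref{cor surjectivity if relatively prime} already applies).

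First I would verify that $\varphi_{m,n}^{-1}(\boldsymbol 0) = \{\boldsymbol 0\}$. Let $\boldsymbol\alpha = (\alpha_0,\dots,\alpha_m) \neq \boldsymbol 0$, and let $f = \alpha_0 + \dots + \alpha_m x^m$, of degree $e$ with $0 \le e \le m$. The leading coefficient of $f^n$ is $\alpha_e^n \neq 0$ and sits in degree $en$; because $0 \le e \le m$, this coefficient is precisely the output coordinate $a_e$ of $\varphi_{m,n}(\boldsymbol\alpha)$, so $\varphi_{m,n}(\boldsymbol\alpha) \neq \boldsymbol 0$. As $\varphi_{m,n}(\boldsymbol 0) = \boldsymbol 0$, this gives the claim.

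With this in hand I would run the dimension count. Let $Z = \overline{\varphi_{m,n}(\aff_K^{m+1})}$, an irreducible subvariety of $\aff_K^{m+1}$ of some dimension $r \le m+1$, so that $\varphi_{m,n}\colon \aff_K^{m+1}\to Z$ is dominant. By the theorem on dimensions of fibers (see \cite[\S I.6, Theorem 7]{shafarevich1994basic}), every component of a nonempty fiber over a point of the image has dimension at least $(m+1) - r$; applying this to the fiber over $\boldsymbol 0$, which by the previous step is the single point $\{\boldsymbol 0\}$ and hence has dimension $0$, forces $r = m+1$. Thus $Z = \aff_K^{m+1}$ and $\varphi_{m,n}$ is dominant. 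Passing to $\proj_K^m$ through the continuous surjection $\aff_K^{m+1}\smallsetminus\{\boldsymbol 0\}\to\proj_K^m$ exactly as in the proof of \Cref{cor surjectivity if relatively prime}---noting that $\varphi_{m,n}(\aff_K^{m+1})\smallsetminus\{\boldsymbol 0\} = \varphi_{m,n}(\aff_K^{m+1}\smallsetminus\{\boldsymbol 0\})$ by the first step---yields dominance of the projective map, and \Cref{lemma morphism conditions} then delivers surjectivity of $\varphi_{m,n}\colon\aff_K^{m+1}\to\aff_K^{m+1}$.

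\textbf{The main obstacle} is that the fiber-dimension theorem is ordinarily stated for a morphism that is already dominant onto its target, whereas $\varphi_{m,n}$ is not yet known to be dominant; I sidestep this by replacing the codomain with the closure $Z$ of the image before invoking the theorem. A minor point to check separately is the base case $m = 0$, where $\varphi_{0,n}\colon \aff_K^1 \to \aff_K^1$ is $\alpha_0 \mapsto \alpha_0^n$, which is surjective because $K$ is algebraically closed.
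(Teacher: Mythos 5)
Your proposal is correct, but it takes a genuinely different route from the paper, and in several respects a stronger one. The paper handles $\gcd(m,n) > 1$ by padding: it invokes \Cref{cor surjectivity if relatively prime} for $\varphi_{m+1,n}$, hits the target $(a_0, \dots, a_m, 0)$, and uses a degree argument to show the top coordinate of the preimage vanishes. You never reduce to the coprime case at all: the observation that $\varphi_{m,n}^{-1}(\boldsymbol{0}) = \{\boldsymbol{0}\}$ (the leading coefficient of $f^n$ is $\alpha_e^n \neq 0$ and sits in a tracked degree $en$), combined with the fiber-dimension theorem applied to $\aff_K^{m+1} \to Z$, forces $\dim Z = m+1$, hence dominance, and \Cref{lemma morphism conditions} finishes exactly as in \Cref{cor surjectivity if relatively prime}. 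This buys quite a lot. First, your argument makes \Cref{prop Jacobian} and \Cref{cor surjectivity if relatively prime} unnecessary for the main theorem, and since no step of it (nor of \Cref{lemma morphism conditions}) uses the standing assumption that $p = 0$ or $n < p$, it in fact proves \Cref{thm main} with no hypothesis on the characteristic of $K$ whatsoever --- a genuine strengthening, as the paper's Jacobian computation is the only place that hypothesis is needed. Second, your first step supplies a justification the paper leaves implicit: that the degree-$n$ forms defining $\varphi_{m,n}$ have no common zero away from the origin is precisely what makes the induced map $\proj_K^m \to \proj_K^m$ a morphism rather than merely a rational map, which \Cref{lemma morphism conditions} silently requires. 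Third, your proof sidesteps an actual flaw in the paper's: the assertion that $\gcd(m,n) > 1$ forces $\gcd(m+1,n) = 1$ is false (take $m = 2$, $n = 6$), so the paper's reduction as written needs repair (one should pad instead to any $M \geq m$ with $\gcd(M,n) = 1$, which exists, and the same degree argument gives $\alpha_j = 0$ for $j > m$), whereas yours needs none. What the paper's route buys in exchange is brevity --- a short reduction reusing prior results --- and the \'{e}tale/Jacobian information that feeds the degree and ramification discussion in the closing Remark, which your argument does not touch. One caveat on your write-up: \cite[\S I.6, Theorem 7]{shafarevich1994basic} states the fiber-dimension theorem for a \emph{surjective} morphism, and restricting the codomain to $Z$ gives you only a dominant one, so the restriction alone does not resolve the mismatch you flag; what you need is the standard variant that for a dominant morphism of irreducible varieties every component of a fiber over a point of the image has dimension at least the difference of dimensions (provable by cutting out $\boldsymbol{0}$ inside $Z$ by $\dim Z$ local equations and applying Krull's height theorem upstairs), and you should cite or prove that version explicitly.
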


\begin{proof}
If we have $\gcd(m, n) = 1$, then we are done by \Cref{cor surjectivity if relatively prime}, so let us assume that $\gcd(m, n) > 1$.  In that case, we certainly have $\gcd(m + 1, n) = 1$, and then \Cref{cor surjectivity if relatively prime} tells us that the morphism $\varphi_{m+1, n} : \aff_K^{m+2} \to \aff_K^{m+2}$ is surjective.  Choose any $(m+1)$-tuple $(a_0, \dots, a_m) \in K^{m+1}$.  There is an $(m+2)$-tuple $(\alpha_0, \dots, \alpha_{m+1}) \in K^{m+2}$ such that we have $\varphi_{m+1, n}(\alpha_0, \dots, \alpha_{m+1}) = (a_0, \dots, a_m, 0)$.  By definition of $\varphi_{m+1, n}$, the polynomial $f(x) := \alpha_0 x + \dots + \alpha_{m+1} x^{m+1}$ satisfies that the polynomial $f^n$ has no degree-$(m+1)n$ term.  Since we have $\deg(f) \leq m + 1$, we also have $\deg(f^n) = n\deg(f) \leq (m + 1)n$, and by what we have just observed, this inequality is strict; therefore we have the strict inequality $\deg(f) < m + 1$ as well.  In other words, we have $\alpha_{m+1} = 0$, so that $f(x) = \alpha_0 + \dots + \alpha_m x^m$.  By construction, the degree-$jn$ coefficient of $f^n$ equals $a_j$ for $0 \leq j \leq m$, meaning that $\varphi_{m, n}(\alpha_0, \dots, \alpha_m) = (a_0, \dots, a_m)$.  This proves the desired surjectivity.
\end{proof}

We finish by including a completely independent proof of \Cref{thm main} in the case that $n = 2$ (noting that the hypothesis on the characteristic of $K$ now always holds), which is of a more constructive nature but which does not generalize in any obvious way to $n \geq 3$.  This is adapted from the author's collaborated preprint \cite{fiore2023clusters}, in which it appears in the form of Proposition 4.20.  The author would like to credit Leonardo Fiore for having originally provided this proof.

\begin{proof}[Proof of \Cref{thm main} in the $n = 2$ case]
The case of $a_0 = \cdots = a_m = 0$ is obvious, so we may assume there is a greatest integer $m' \in \{0, \dots, m\}$ such that $a_{m'} \neq 0$.  In this case, a polynomial satisfying the desired condition with respect to the elements $a_0, \dots, a_{m'} \in K$ satisfies it for $a_0, \dots, a_m \in K$, so, after replacing $m$ with $m'$, we assume from now on that $a_m \neq 0$.  Letting $\sqrt{a_m} \in K$ be a square root of $a_m$, if $f(x) \in K[x]$ is a polynomial satisfying the desired condition with respect to $a_m^{-1} a_0, \dots, a_m^{-1} a_{m-1}, 1 \in K$, then it is clear that $\sqrt{a_m}f$ satisfies the desired condition with respect to $a_0, \dots, a_{m-1}, a_m \in K$, so, after replacing $a_j$ with $a_m^{-1} a_j$ for $0 \leq j \leq m$, we assume from now on that $a_m = 1$.

Let the roots of the polynomial $a_0 + a_1 x + \dots + x^m$ be denoted (with multiplicity) by $\beta_1, \dots, \beta_m \in K$.  For each $\beta_i$, choose a square root $\sqrt{\beta_i} \in K$.  It is clear that the multiset of roots of the polynomial $g(x) := a_0 + a_1 x^2 + \dots + x^{2m}$ (counted with multiplicity) is $\{\pm\sqrt{\beta_i}\}_{1 \leq i \leq m}$, and that we have $g = h_+ h_-$, where 
\begin{equation*}
	\begin{split}
		h_+(z) &:= \prod_i (z+\sqrt{\beta_i}) = c_0 + c_1 x + \dots + x^m,\\
		h_-(z) &:= \prod_i (z-\sqrt{\beta_i}) = (-1)^m c_0 + (-1)^{m-1} c_1 x + \dots + x^m.\\
	\end{split}
\end{equation*}
For $0 \leq k \leq 2m$, the degree-$k$ coefficient of $g$ is given by $\sum_{i+j=k} (-1)^{m-i} c_i c_j$.

Let us now choose a square root $\sqrt{-1} \in K$ of $-1$ and define 
\begin{equation*}
	c'_i := \begin{cases}
		c_i & \text{if $m \equiv i$ (mod $2$)},\\
		\sqrt{-1}\cdot c_i & \text{if $m \not\equiv i$ (mod $2$)}.
	\end{cases}
\end{equation*}
Then for each even value of $k$, we may rewrite our formula for the degree-$k$ coefficient of $g$ in the more symmetric form $\sum_{i+j=k} c'_i c'_j.$  Now letting $f(x) = c'_0 + c'_1 x + \dots + c'_m x^m$, it is clear that the even-degree terms of $[f(x)]^2$ add up to $g(x)$.  We have thus proved \Cref{thm main} for $n = 2$.
\end{proof}

\begin{rmk}

One notes in the case of $n = 2$ that the above proof furnishes $2^{m+1}$ polynomials $f$ satisfying the desired conclusion, coming from the $m + 1$ (independent) choices of sign of the elements $\sqrt{a_m}, \sqrt{\beta_1}, \dots, \sqrt{\beta_m} \in K$, provided $K$ does not have characteristic $2$ and that the elements $\beta_1, \dots, \beta_m$ are all distinct (in other words, provided the polynomial $a_0 + a_1 x + \dots + x^m$ has no multiple root).  For general $n$ and for a ``general" choice of elements $a_0, \dots, a_m \in K$, assuming that $K$ has characteristic either $0$ or greater than $n$, we suspect that there are exactly $n^{m+1}$ polynomials $f$ satisfying the property claimed by \Cref{thm main}, or in other words, that the degree of the morphism $\varphi_{m, n}$ equals $n^{m+1}$.

One can show, using arguments analogous to those in the proof of \Cref{prop Jacobian}, that a partial converse of \Cref{prop Jacobian} also holds: if we have $\gcd(m, n) > 1$, then the Jacobian matrix is \textit{not} invertible at $(\alpha_0, \dots, \alpha_m) = (1, 0, \dots, 0, 1)$.  This implies that in this case, the morphism $\varphi_{m, n}$ does not ramify at $(1, 0, \dots, 0, 1)$.  Note that when $n \mid m$, letting $(a_0, \dots, a_m) = \varphi_{m, n}(1, 0, \dots, 0, 1)$, we have $a_0 + a_1 x + \dots + a_m x^m = (1 + x^{m/n})^n$, a polynomial with a multiple root (although when $\gcd(m, n) > 1$ and $n \nmid m$, this is generally not true).  It seems plausible that for any $m \geq 0$ and $n \geq 1$, given elements $\alpha_0, \dots, \alpha_m$ and letting $(a_0, \dots, a_m) = \varphi_{m, n}(\alpha_0, \dots, \alpha_m)$, the inseparability of the polynomial $a_0 + a_1 x + \dots + a_m x^m$ may be a necessary (but not sufficient) condition for the map $\varphi_{m, n}$ to ramify at $(\alpha_0, \dots, \alpha_m)$.

\end{rmk}

\bibliographystyle{plain}
\bibliography{bibfile}

\end{document}